\numberwithin{equation}{section}
\newtheorem{lemma}{Lemma}[section]
\newtheorem{Lemma}{Lemma}[section]
\newtheorem{theorem}[Lemma]{Theorem}
\newtheorem{corollary}[Lemma]{Corollary}
\newcommand{\reals}{{\mathbb R}}
\newcommand{\bbr}{\reals}
\newcommand{\bbn}{{\mathbb N}}
\newcommand{\vep}{\varepsilon}
\newcommand{\bbc}{C(\reals)}
\newcommand{\BX}{{\bf X}}
\newcommand{\BY}{{\bf Y}}
\newcommand{\BZ}{{\bf Z}}
\newcommand{\bx}{{\bf x}}
\newcommand{\by}{{\bf y}}
\newcommand{\one}{{\bf 1}}
\def\ProbSpace{\bigl( \Omega, {\mathcal F}, P\bigr)}
\def\cadlag{c\`adl\`ag} 
\begin{document}

\title[Location of the supremum]{Distribution of the supremum location of stationary processes}

\author[G. Samorodnitsky]{Gennady Samorodnitsky}
\address{School of Operations Research and Information Engineering\\
and Department of Statistical Science \\
Cornell University \\
Ithaca, NY 14853}
\email{gs18@cornell.edu}
\author[Y. Shen]{Yi Shen}
\address{School of Operations Research and Information Engineering\\
Cornell University\\
Ithaca, NY 14853}
\email{ys437@cornell.edu}

\thanks{This research was partially supported by the ARO
grant W911NF-07-1-0078, NSF grant  DMS-1005903 and NSA grant
H98230-11-1-0154  at Cornell University.}

\subjclass{Primary 60G10, 60G17} 
\keywords{stationary process, global supremum location, bounded variation, strong mixing
\vspace{.5ex}}

\setcounter{lemma}{0}


\begin{abstract}
The location of the unique supremum of
a stationary process on an interval does not need to be uniformly
distributed over that interval. We describe all possible distributions
of the supremum 
location for a broad class of such stationary processes. We show that,
in the strongly mixing case, this distribution does tend to the
uniform in a certain sense as the length of the interval increases to
infinity. 
\end{abstract}

\maketitle
\section{Introduction}\label{sec: Intro}

\baselineskip=18pt 

 Let $\BX = (X(t),\, t\in\bbr)$ be a sample continuous stationary
 process. Even if, on an event of probability 1, the supremum of the
 process over a compact interval $[0,T]$ is
 attained at a unique point, this point does not have to be uniformly
 distributed over that interval, as is known since
 \cite{leadbetter:lindgren:rootzen:1983}. However, its distribution
 still has to be 
 absolutely continuous in the interior of the interval, and the
 density has to satisfy very specific general constraints, as was
 shown in a recent paper \cite{samorodnitsky:yi:2011a}. 

 In this paper we give a complete description of the family of possible
 densities of the supremum location for a large class of sample
 continuous  stationary processes. The necessary conditions on these
 densities follow by combining certain general results cited above,
and for every function satisfying these necessary conditions we
construct a stationary process of the required type for which this 
function is the density of the supremum location. This is done in
Section \ref{sec:special}, which is preceded by Section
\ref{sec:assumptions} in which we describe the class of stationary
processes we are considering and quote the results from
\cite{samorodnitsky:yi:2011a} we need in the present paper. 
Next, we show that for a large class of stationary processes,  under a
certain strong mixing assumption, the distribution of
the supremum location does converge to the uniformity for very long
intervals, and it does it in a strong sense. This is shown in Section
\ref{sec:long.intervals}.

\section{Preliminaries} \label{sec:assumptions} 

For most of this paper $\BX = (X(t),\, t\in\bbr)$ is a
stationary process with continuous sample paths, defined on
a probability space $\ProbSpace$, but in Section 
\ref{sec:long.intervals} we will allow upper semi-continuous sample
paths. In most of the paper (but not in Section 
\ref{sec:long.intervals})  we will also impose two assumptions on the 
process, which we now state. 

For $T>0$ we
denote  by $X_*(T) = \sup_{0\leq t\leq T}X(t)$, the largest value of
the process in the interval $[0,T]$. 

\medskip\noindent
{\bf Assumption} U$_T$: 
$$
P\Bigl( X(t_i)=X_*(T), \, i=1,2, \ 
\text{for two different $t_1,t_2\in [0,T]$}\Bigr)=0.
$$

It is easy to check that the probability in Assumption U$_T$ is
well defined. Under the assumption, the supremum over
interval $[0,T]$ is uniquely achieved. 

The second assumption on a stationary process deals with the
fluctuations of its sample paths. 

\medskip\noindent
{\bf Assumption} L: 
$$
K:=\lim_{\vep\downarrow 0}\frac{P\bigl( \text{$\BX$ has a local
    maximum in $(0,\vep)$}\bigr)}{\vep}<\infty\,,
$$
with the limit easily shown to exist. 
Under Assumption L the process $\BX$ has sample paths
of locally bounded variation; see Lemma 2.2 in
\cite{samorodnitsky:yi:2011a}.

For a compact interval $[a,b]$, we will denote by 
$$
\tau_{\BX,[a,b]}= \min\bigl\{ t\in [a,b]:\ X(t) =\sup_{a\leq s\leq
  b}X(s)\bigr\} 
$$
the leftmost location of the supremum in the interval; it is a well
defined random variable.  If the supremum
is unique, the adjective ``leftmost'' is, clearly, redundant. For
$a=0$, we will abbreviate $\tau_{\BX,[0,b]}$ to $\tau_{\BX,b}$, and use
the same abbreviation in similar situations in the sequel. 

We denote by $F_{\BX,[a,b]}$ the law of $\tau_{\BX,[a,b]}$; it is a
probability measure on the interval $[a,b]$. It was proved in
\cite{samorodnitsky:yi:2011a} that  for any $T>0$ the probability
measure $F_{\BX,T}$ is absolutely continuous in the interior of the
interval $[0,T]$, and density can be chosen to be right continuous and
have left limits; we call this version of the density
$f_{\BX,[a,b]}$. This version of the density satisfies a universal
upper bound 
\begin{equation} \label{e:upper.bound}
f_{\BX,T}(t) \leq \max\left(\frac1t,\frac{1}{T-t}\right), \ 0<t<T\,.
\end{equation}

We will also use the following result from the above
reference. 
\begin{lemma} \label{l:key}
Let $0\leq \Delta<T$. Then for every $0\leq \delta\leq \Delta$, 
$f_{\BX,T-\Delta}(t)\geq f_{\BX,T}(t+\delta)$ almost everywhere in
  $(0,T-\Delta)$. Furthermore, for every such $\delta$ and every
  $\vep_1,\vep_2\geq 0$, such that $\vep_1+\vep_2<T-\Delta$, 
\begin{equation} \label{e:density.var.int}
\int_{\vep_1}^{T-\Delta-\vep_2}\bigl(
f_{\BX,T-\Delta}(t)-f_{\BX,T}(t+\delta)\bigr)\, dt 
\end{equation} 
$$
\leq
\int_{\vep_1}^{\vep_1+\delta} f_{\BX,T}(t)\, dt
+ \int_{T-\Delta-\vep_2+\delta}^{T-\vep_2} f_{\BX,T}(t)\, dt\,.
$$
\end{lemma}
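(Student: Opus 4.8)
The plan is to derive Lemma~\ref{l:key} from a coupling-type comparison between the supremum locations on the two intervals $[0,T]$ and $[0,T-\Delta]$, exploiting stationarity together with the fact that shrinking the interval cannot hurt the chance of seeing the supremum at an interior point. First I would fix $\delta\in[0,\Delta]$ and relate $\tau_{\BX,[\delta,T-\Delta+\delta]}$ to $\tau_{\BX,T-\Delta}$: by stationarity the process $(X(t+\delta),\,0\le t\le T-\Delta)$ has the same law as $(X(t),\,0\le t\le T-\Delta)$, so $\tau_{\BX,[\delta,T-\Delta+\delta]}-\delta \eqd \tau_{\BX,T-\Delta}$, and in particular the density of $\tau_{\BX,[\delta,T-\Delta+\delta]}$ at $t+\delta$ equals $f_{\BX,T-\Delta}(t)$ for a.e.\ $t\in(0,T-\Delta)$. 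The point is then to compare $\tau_{\BX,[\delta,T-\Delta+\delta]}$ (supremum location over a subinterval of length $T-\Delta$ sitting inside $[0,T]$) with $\tau_{\BX,T}$ (supremum location over the full interval $[0,T]$).

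The key observation is the elementary set inclusion: on the event that the global supremum over $[0,T]$ is attained at a point $s\in(\delta+a,\delta+b)$ for some subinterval $(a,b)\subseteq(0,T-\Delta)$, the supremum over the subinterval $[\delta,T-\Delta+\delta]$ is attained at the \emph{same} point $s$, because $s$ lies in that subinterval and is a global maximum. Hence
\begin{equation} \label{e:inclusion}
\bigl\{ \tau_{\BX,T}\in(\delta+a,\delta+b)\bigr\}\subseteq \bigl\{\tau_{\BX,[\delta,T-\Delta+\delta]}\in(\delta+a,\delta+b)\bigr\},
\end{equation}
using uniqueness of the supremum (Assumption U$_T$) to handle the leftmost-location bookkeeping. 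Taking $(a,b)$ to be an arbitrary subinterval of $(0,T-\Delta)$ and translating back gives, for a.e.\ $t$, the pointwise density inequality $f_{\BX,T-\Delta}(t)\ge f_{\BX,T}(t+\delta)$, which is the first assertion. For the integral refinement \eqref{e:density.var.int}, I would integrate \eqref{e:inclusion} over the window $(a,b)=(\vep_1,T-\Delta-\vep_2)$ and carefully account for the \emph{deficit}: the event on the right of \eqref{e:inclusion} can also occur when $\tau_{\BX,T}$ falls outside $(\delta+\vep_1,\,T-\Delta-\vep_2+\delta)$ but the restricted supremum location still lands inside it; since $[\delta,T-\Delta+\delta]\subseteq[0,T]$, whenever the restricted supremum location is interior to the window but the global one is not, the global one must lie in $[0,\delta+\vep_1]\cup[T-\Delta-\vep_2+\delta,T]$, equivalently (after accounting for where the maximum over the subinterval sits) $\tau_{\BX,T}\in(\vep_1,\vep_1+\delta)\cup(T-\Delta-\vep_2+\delta,T-\vep_2)$ in the translated coordinates. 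Bounding the probability of that extra event by $F_{\BX,T}$ of those two little intervals yields exactly the right-hand side of \eqref{e:density.var.int}.

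The main obstacle I anticipate is the precise combinatorics of the leftmost-location map under restriction of the interval — i.e.\ making rigorous the claim that "if the global max over $[0,T]$ is in the open window then the leftmost argmax over the subinterval is the same point". One has to rule out the scenario where the restricted supremum equals the global supremum value but is attained earlier within $[\delta,T-\Delta+\delta]$ at a point not equal to $\tau_{\BX,T}$; Assumption U$_T$ forbids two distinct global maximizers in $[0,T]$, hence a.s.\ forbids this, so the argument goes through after excluding a null set. A secondary technical point is that \eqref{e:density.var.int} involves the right-continuous left-limit version $f_{\BX,T}$ specifically, so the translation of densities must be stated "almost everywhere" and then the inequality between the chosen versions follows because both sides are, up to the stated null sets, determined by the underlying measures $F_{\BX,T-\Delta}$ and $F_{\BX,T}$; I would invoke the absolute-continuity statement from \cite{samorodnitsky:yi:2011a} quoted above to pass freely between measures and densities on the open interval.
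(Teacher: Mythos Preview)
The paper does not prove Lemma~\ref{l:key}; it is quoted verbatim as ``the following result from the above reference'' \cite{samorodnitsky:yi:2011a}. So there is no proof here to compare against, and I can only assess your argument on its own merits.

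Your treatment of the first assertion (the a.e.\ pointwise inequality $f_{\BX,T-\Delta}(t)\ge f_{\BX,T}(t+\delta)$) is correct: the stationarity identification $\tau_{\BX,[\delta,T-\Delta+\delta]}-\delta\eqd\tau_{\BX,T-\Delta}$ together with the inclusion \eqref{e:inclusion} does the job, and Assumption~U$_T$ handles the uniqueness bookkeeping exactly as you say.

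The second assertion, however, has a genuine gap. Your ``deficit'' analysis is not right. On the event
\[
\bigl\{\tau_{\BX,[\delta,T-\Delta+\delta]}\in(\vep_1+\delta,\,T-\Delta-\vep_2+\delta)\bigr\}\setminus\bigl\{\tau_{\BX,T}\in(\vep_1+\delta,\,T-\Delta-\vep_2+\delta)\bigr\},
\]
your own inclusion argument forces $\tau_{\BX,T}\notin[\delta,T-\Delta+\delta]$ (if the global argmax were in the subinterval it would coincide with the restricted one, by uniqueness). Hence $\tau_{\BX,T}\in[0,\delta)\cup(T-\Delta+\delta,T]$, \emph{not} $(\vep_1,\vep_1+\delta)\cup(T-\Delta-\vep_2+\delta,T-\vep_2)$. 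The phrase ``equivalently (after accounting for where the maximum over the subinterval sits) \ldots\ in the translated coordinates'' does not supply a valid passage between these two regions; they are simply different sets, and there is no translation that turns one probability into the other.

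The fix is to first rewrite \eqref{e:density.var.int}: moving the subtracted integral to the right, the three $f_{\BX,T}$-integrals on the right cover the contiguous interval $(\vep_1,T-\vep_2)$, so \eqref{e:density.var.int} is equivalent to
\[
F_{\BX,T-\Delta}\bigl((\vep_1,\,T-\Delta-\vep_2)\bigr)\ \le\ F_{\BX,T}\bigl((\vep_1,\,T-\vep_2)\bigr),
\]
i.e.\ (taking complements)
\[
F_{\BX,T}\bigl([0,\vep_1]\bigr)+F_{\BX,T}\bigl([T-\vep_2,T]\bigr)\ \le\ F_{\BX,T-\Delta}\bigl([0,\vep_1]\bigr)+F_{\BX,T-\Delta}\bigl([T-\Delta-\vep_2,T-\Delta]\bigr).
\]
Now each summand is handled by your inclusion \eqref{e:inclusion}, applied at the \emph{endpoints} rather than in the interior: for the left piece take $\delta=0$ and note $\{\tau_{\BX,T}\in[0,\vep_1]\}\subseteq\{\tau_{\BX,T-\Delta}\in[0,\vep_1]\}$; for the right piece take the shift $\delta=\Delta$ (so compare with $[\Delta,T]$) and use stationarity to get $\{\tau_{\BX,T}\in[T-\vep_2,T]\}\subseteq\{\tau_{\BX,[\Delta,T]}\in[T-\vep_2,T]\}$ together with $P(\tau_{\BX,[\Delta,T]}\in[T-\vep_2,T])=F_{\BX,T-\Delta}([T-\Delta-\vep_2,T-\Delta])$. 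Both inclusions are immediate from uniqueness, exactly as in your first part. So your core idea is right; it is only the bookkeeping of the integral inequality that needs to be redone along these lines.
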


\section{Processes satisfying Assumption L}
\label{sec:special} 

In this section we prove our main theorem, giving a full description
of possible \cadlag\ densities $f_{\BX,T}$ for continuous stationary
processes satisfying Assumption U$_T$ and Assumption L. 

\begin{theorem} \label{t:construction}
Let $\BX = (X(t),\, t\in\bbr)$ be a stationary sample continuous
process, satisfying Assumption U$_T$ and Assumption L. Then the
restriction of the law $F_{\BX,T}$ of the unique location of the
supremum of the process in $[0,T]$ to the interior $(0,T)$ of the interval is absolutely
continuous. The density $f_{\BX,T}$ has a \cadlag\ version with the
following properties: 

(a) \ The density has a bounded variation on $(0,T)$, hence the limits 
$$
f_{\BX,T}(0+)=\lim_{t\to 0} f_{\BX,T}(t) \ \text{and} \ f_{\BX,T}(T-)=\lim_{t\to T}
f_{\BX,T}(t) 
$$
exist and are finite. Furthermore, 
\begin{equation} \label{e:TV}
TV_{(0, T)}(f_{\BX,T}) \leq f_{\BX,T}(0+) + f_{\BX,T}(T-)\,.
\end{equation}

(b) \ The density is bounded away from zero. That is,
\begin{equation} \label{e:density.lower.bound}
\inf_{0<t<T}f_{\BX,T}(t) >0\,.
\end{equation}

(c) \ Either $f_{\BX,T}(t)=1/T$ for all $0<t<T$, or $\int_0^T 
f_{\BX,T}(t)\, dt<1$. 

\bigskip

Moreover, if $f$ is a nonnegative \cadlag\ function satisfying (a)-(c)
above, then there is a stationary sample continuous
process $\BX$, satisfying Assumption U$_T$ and Assumption L, such that
$f$ is the density in the interior $(0,T)$ of the unique location of the
supremum of the process in $[0,T]$. 
\end{theorem}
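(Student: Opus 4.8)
The plan is to split the theorem into its two halves and treat the "necessity" direction by assembling the cited results, then spend the bulk of the effort on the "sufficiency" (construction) direction.

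For the necessity part (parts (a)--(c)), the absolute continuity and the existence of a \cadlag\ version are quoted verbatim from \cite{samorodnitsky:yi:2011a}, so nothing new is needed there. For part (a), I would derive the bounded variation bound \eqref{e:TV} from Lemma \ref{l:key}: taking $\Delta=0$ and comparing $f_{\BX,T}(t)$ with its shift $f_{\BX,T}(t+\delta)$, the inequality \eqref{e:density.var.int} controls the total positive variation of the shifted difference by boundary mass terms of order $\delta$; letting the mesh of a partition go to zero and summing, the positive variation is bounded by $f_{\BX,T}(0+)+f_{\BX,T}(T-)$, and by the \cadlag\ property the negative variation is controlled symmetrically (or one uses that a density integrates to at most $1$ together with the monotone-type structure the lemma forces). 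The finiteness of the one-sided limits $f_{\BX,T}(0+)$ and $f_{\BX,T}(T-)$ then follows from bounded variation, and the upper bound \eqref{e:upper.bound} is not even needed here except to note consistency. For part (b), the lower bound \eqref{e:density.lower.bound}: from Lemma \ref{l:key} with $\delta=0$ we get $f_{\BX,T-\Delta}(t)\ge f_{\BX,T}(t)$, i.e.\ shrinking the interval only increases the density pointwise a.e.; combined with the \cadlag\ regularity and the fact that the density cannot vanish on a subinterval (else the supremum would avoid a whole subinterval with positive-length complement, contradicting stationarity — this is the kind of argument in \cite{samorodnitsky:yi:2011a}), one gets a uniform positive lower bound. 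Part (c) is a dichotomy: if $\int_0^T f_{\BX,T}=1$ then the supremum location lies in $(0,T)$ almost surely, and a stationarity/shift argument forces the density to be shift-invariant, hence constant, hence $1/T$.

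The real work is the construction. Given a nonnegative \cadlag\ $f$ on $(0,T)$ satisfying (a)--(c), I would build $\BX$ as a mixture (over a random "shape" and a random "location") of deterministic continuous bump functions, engineered so that the location of the unique maximum has law with density $f$. Concretely: when $\int_0^T f<1$, put mass $1-\int_0^T f$ on a configuration whose maximum is at an endpoint (so it contributes nothing to the interior density) and distribute the remaining mass so that the interior part matches $f$; the bounded-variation hypothesis \eqref{e:TV} is exactly what is needed to realize $f$ as a genuine density arising from shifting a fixed profile — write $f$ as a difference of its positive-variation and negative-variation parts and convert these into a probability measure over triangular/tent-shaped peaks of varying widths placed at varying positions, then extend periodically and randomize the phase to make the process stationary. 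Assumption U$_T$ is arranged by making the peaks generic (e.g.\ perturbing heights by an independent continuous random variable so ties have probability zero), and Assumption L is arranged by using piecewise-linear (or otherwise finite-local-maximum-rate) sample paths so that $K<\infty$. The degenerate case $f\equiv 1/T$ is handled separately by any standard stationary process whose supremum location is uniform (e.g.\ a suitably randomized periodic sawtooth).

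The main obstacle is the construction's bookkeeping: one must produce a single stationary process on all of $\bbr$ whose restriction to $[0,T]$ has supremum-location density exactly $f$, while simultaneously respecting Assumption L globally (finite local-maximum intensity per unit length) and Assumption U$_T$. The delicate point is translating the inequality \eqref{e:TV}, $TV_{(0,T)}(f)\le f(0+)+f(T-)$, into the combinatorics of which bump-shapes and offsets are allowed: this inequality is precisely the obstruction that must be saturated or respected when one writes $f$ as an average of indicator-like "the max is here" contributions from translated profiles, and getting the accounting to close — so that total probability is $1$, the interior density is $f$, and no extra mass leaks to the endpoints beyond the allowed $1-\int_0^T f$ — is where the argument needs care. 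I expect the proof to proceed by first constructing an explicit family of deterministic paths parametrized by a peak location $u\in(0,T)$ and a peak width, choosing the mixing distribution over $(u,\text{width})$ via the Jordan decomposition of $f$, verifying the density identity by a direct computation of $P(\tau_{\BX,T}\le t)$, and finally periodizing and phase-randomizing to obtain stationarity, checking L and U$_T$ at the end.
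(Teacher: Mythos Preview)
Your necessity sketch is largely beside the point: the paper simply cites Theorems 3.1 and 3.3 of \cite{samorodnitsky:yi:2011a} for (a)--(c), so no new argument is required. (Your outline for (b) is also not a proof: ``density nonzero on every subinterval'' does not imply a uniform positive lower bound, and ``supremum avoids a subinterval contradicts stationarity'' is not a valid inference, since $\tau_{\BX,T}$ is not itself a stationary object.)

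The real gap is in the construction. You have the right skeleton---a deterministic periodic function with a uniform random phase, built from tent-shaped pieces---but two essential ingredients are missing. First, the decomposition of $f$ you propose (Jordan decomposition into monotone parts) is not the one that makes the construction close. The paper instead writes a piecewise-constant $f$ as $\frac{1}{HT}\sum_i \one_{[u_i,v_i)}$ for a \emph{proper collection of blocks}: nested or disjoint subintervals of $(0,T)$, classified as base ($=(0,T)$), left, right, or central. In this language condition (b) says there is at least one base block, and the TV inequality \eqref{e:TV} becomes the purely combinatorial statement ``\#\,central blocks $\le$ \#\,base blocks'', which is exactly what allows each central block to be paired with a base block inside a single ``component'' of the periodic function. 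That pairing---not the Jordan decomposition---is what makes the accounting of local maxima versus interval shifts produce precisely the desired density; without it you cannot make a central bump contribute only on its interval $(u,v)$ while simultaneously being dominated on both sides. Moreover, $U_T$ is arranged \emph{deterministically} (by spacing equal-height local maxima at distance $>T$ within a component), not by randomizing heights, which would complicate stationarity.

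Second, you propose to handle a general \cadlag\ $f$ directly, but the paper does not: it first carries out the block construction only for step functions of the form above, then approximates an arbitrary $f$ uniformly by such step functions $f_n$, builds the corresponding periodic processes $\BX_n$, and passes to a weak limit in $C(\bbr)$ along a diagonal subsequence. This limiting step is where most of the technical effort goes: one must show the $\BX_n$ have uniformly bounded Lipschitz constants (tightness), uniformly bounded derivative lower bounds and local-maxima spacing (so the limit inherits Assumption L), and uniformly controlled gaps between the top two values on $[0,T]$ (so the limit inherits Assumption $U_T$). These uniformity bounds hinge on controlling the parameter $d_n$ of \eqref{e:W} from both sides and on distributing left/right blocks evenly among components so that no component has more than a fixed number $\Delta_f$ of them. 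None of this bookkeeping appears in your plan, and without it there is no process for a non-step $f$.
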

\begin{proof}
The existence of a \cadlag\ density with properties (a)-(c) in
the statement of the theorem is an immediate consequence of the 
statements of Theorems 3.1 and 3.3 in \cite{samorodnitsky:yi:2011a}. 
We proceed to show the converse part of the theorem. If
$f_{\BX,T}(t)=1/T$ for all $0<t<T$, then a required example is
provided by a single wave periodic stationary Gaussian process with
period $T$, so 
we need only to consider the second possibility in property (c).  We
start with the case where the candidate density $f$ is a piecewise
constant function of a special form. 

We call a finite collection $(u_i,v_i), \, i=1,\ldots, m$ of
nonempty open
subintervals of $(0,T)$ {\it a proper collection of blocks} if for any
$i,j=1,\ldots, m$ there are only 3 possibilities:
either $(u_i,v_i)\subseteq (u_j,v_j)$, or $(u_j,v_j)\subseteq
(u_i,v_i)$, or $[u_i,v_i]\cap [u_j,v_j]=\emptyset$. If $u_i=0, \,
v_i=T$, we call $(u_i,v_i)$ a base block. If $u_i=0, \,
v_i<T$, we call $(u_i,v_i)$ a left block. If $u_i>0, \,
v_i=T$, we call $(u_i,v_i)$ a right block. If $u_i>0, \,
v_i<T$, we call $(u_i,v_i)$ a central block. We start with
constructing a stationary process as required in the theorem when the
candidate density $f$ satisfies requirements (a)-(c) of the theorem
and has the form 
\begin{equation} \label{e:f.proper}
f(t) = \frac{1}{HT}\sum_{i=1}^m \one_{[u_i,v_i)}(t), \ 0<t<T
\end{equation}
for some proper collection of blocks, 
with the obvious convention at the endpoints 0 and $T$, for some 
$H>1$. 
 Observe that for functions of the type \eqref{e:f.proper}, 
requirement (b) of the theorem is equivalent to requiring that there
is at least one base block, and requirement (a) is equivalent to requiring that the
number of the central blocks does not exceed the number of the base
blocks. Finally, (the second case of) property (c) is equivalent to
requiring that
\begin{equation} \label{e:W}
d=\frac{1}{m} \left( HT - \sum_{i=1}^m (v_i-u_i)\right)>0\,.
\end{equation}

We will construct a stationary
process by a uniform shift of a periodic deterministic function over
its period. Now, however, the period will be equal to $HT>T$. We
start, therefore, by defining a 
deterministic continuous function $( x(t),\, 0\leq t\leq HT)$ with
$x(0)=x(HT)$, which we then extend by periodicity to the entire
$\bbr$. Let $B\geq 1$ be the number of the base blocks in the
collection. We partition the entire collection of blocks into $B$
subcollection which we call {\it components} by assigning each base
block to one component, assigning to each component at most one
central block, and assigning the left and right blocks to components
in an arbitrary way. For $j=1,\ldots, B$ we denote by
\begin{equation} \label{e:Lj}
L_j = d\bigl( \text{the number of blocks in the $j$th component} \bigr)
\end{equation}
$$+\, 
\text{the total length of the blocks in the $j$th
  component}\,.
$$
We set $x(0)=2$. Using the blocks of the first component we will
define the function $x$ on the interval $(0,L_1]$ in such a way that
$x(L_1)=2$. Next, using the blocks of the second component we will
define the function $x$ on the interval $(L_1,L_1+L_2]$ in such a way that
$x(L_1+L_2)=2$, etc. Since 
$$
\sum_{j=1}^B L_j = dm + \sum_{i=1}^m (v_i-u_i) = HT\,,
$$
this construction will terminate with a function $x$ constructed on
the entire interval $[0,HT]$ with $x(HT)=2=x(0)$, as desired. 

We proceed, therefore, with defining the function $x$ on an interval
of length $L_j$ using the blocks of the $j$th component. For
notational simplicity we will take $j=1$ and define $x$ on the
interval $[0,L_1]$ using the blocks of the first component. The
construction is slightly different depending on whether or not the
component has a central block, whether or not it has any left blocks,
and whether or not it has any right blocks. If the component has
$l\geq 1$ left blocks, we will denote them by $(0,v_j)$, $j=1,\ldots,
l$. If the component has $r\geq 1$ right blocks, we will denote them
by $(u_j,T)$, $j=1,\ldots, r$. If the component has a central block,
we will denote it by $(u,v)$. We will construct the function $x$ by
defining it first on a finite number of special points and then
filling in the gaps in a piecewise linear manner.

Suppose first that the component has a central block, some left blocks
and some right blocks. In this case we proceed as follows.

{\bf Step 1} \ Recall that $x(0)=2$ and set
$$
x\left( jd + \sum_{i=1}^{j-1}v_i\right) = 
x\left( jd + \sum_{i=1}^{j}v_i\right) = 2-2^{j-l},\ j=1,\ldots, l\,.
$$
Note that the last point obtained in this step is 
$x\bigl( ld + \sum_{i=1}^{l}v_i\bigr) = 1$. 

{\bf Step 2} \ Set 
$$
x\left( (l+1)d + \sum_{i=1}^{l}v_i\right) 
= x\left( (l+1)d + \sum_{i=1}^{l}v_i+v\right)
$$
$$
=
x\left( (l+1)d + \sum_{i=1}^{l}v_i+v+T-u\right)
=\frac12\,.
$$

{\bf Step 3} \ Set 
$$
x\left( (l+j+1)d + \sum_{i=1}^{l}v_i+v+T-u +
  \sum_{i=1}^{j-1}(T-u_j)\right)
$$
$$
= x\left( (l+j+1)d + \sum_{i=1}^{l}v_i+v+T-u +
  \sum_{i=1}^{j}(T-u_j)\right)
$$
$$
= 2-2^{-(j-1)}, \ j=1,\ldots, r\,.
$$
Note that the last point obtained in this step is
$$
x\left( (l+r+1)d + \sum_{i=1}^{l}v_i+v+T-u +
  \sum_{i=1}^{r}(T-u_j)\right)= 2-2^{-(r-1)}\,.
$$

{\bf Step 4} \ We add just one more point at distance $d$ from the
last point of the previous step by setting
$$
x\left( (l+r+2)d + \sum_{i=1}^{l}v_i+v+T-u +
  \sum_{i=1}^{r}(T-u_j)\right)= 2\,.
$$
Note that this point coincides with $L_1$ as defined in \eqref{e:Lj}. 

\bigskip

If the component has no left blocks, then Step 1 above is skipped, and
Step 2 becomes the initial step with 
$$
x(d) = x(d+v) = x(d+v+T-u) =\frac12\,.
$$

If the component has no right blocks, then Step 3 above is skipped,
and at Step 4 we add the distance $d$ to the final point of Step 2,
that is we set 
$$
x\left( (l+2)d + \sum_{i=1}^{l}v_i+v+T-u\right) = 2\,.
$$

If the component has no central block, then Step 2 is skipped, but we
do add the distance $T$ to the last point of Step 1. That is, the
first point obtained at Step 3 becomes 
$$
x\left( (l+1)d + \sum_{i=1}^{l}v_i+T\right)=1\,,
$$
if there are any left blocks, with the obvious change if
$l=0$. Finally, if there is neither central block, nor any right
blocks, then both Step 2 and Step 3 are skipped, and Step 4 just adds
$d+T$ to the last point of Step 1, i.e. it becomes
$$
x\left( (l+1)d + \sum_{i=1}^{l}v_i +T  \right)= 2\,, 
$$
once again with the obvious change  if $l=0$. It is easy to check that
in any case Step 4 sets $x(L_1)=2$, with $L_1$ as defined in
\eqref{e:Lj}. In particular, $L_1> T$. 

Finally, we specify the piecewise linear rule by which we complete the
construction of the function $x$ on the interval $[0,L_1]$. The
function has been defined on a finite set of points and we proceed
from left to right, starting with $x(0)=2$, to fill the gap between
one point in the finite set and the adjacent point from the right,
until we reach $x(L_1)=2$. By the construction, there are pairs of
adjacent points in which the values of $x$ coincide, and  pairs 
of adjacent points in which the values of $x$ are different. In most
cases only adjacent points at the distance $d$ have equal values of
$x$, but if, e.g.  a central block is missing, then at a pair of adjacent points
at a distance $T$, or $d+T$, the values of $x$ coincide as well. 

In any case, if the values of $x$ at two adjacent points are
different, we define the values of $x$ between these two points by
linear interpolation. If the values of $x$ at two adjacent points,
say, $a$ and 
$b$ with $a<b$, are equal to, say, $y$ we define the function $x$
between these two points by
$$
x(t) = \max\bigl( y-(t-a)/d, \, y-(b-t)/d\bigr)
$$
provided the value at the midpoint, $y-(b-a)/2d\geq -1$. If this lower
bounds fails, we define the values of $x$ between the points $a+dy$
and $b-dy$ by
$$
x(t) = \max\bigl( -\tau( t-(a+dy)), \, -\tau(( b-dy)-t)\bigr)\,,
$$
for an arbitrary $\tau>0$ such that both $\tau\leq 1/d$ and the value at
the midpoint, $-\tau\bigl( (b-a)/2-dy\bigr)\geq -1$. The reason for
this slightly cumbersome definition is the need to ensure that $x$ is
nowhere constant, while keeping the lower bound of $x$ and its
Lipschitz constant under control. We note, at this point, that, since
in all cases $b-a\leq T+d$, we can choose, for a fixed $T$, the value
of $\tau$  so that $\tau\geq \tau_d>0$, where the constant $\tau_d$
stays bounded away from zero for $d$ in a compact interval. 

Now that we have defined a periodic function $(x(t),\, t\in\bbr)$ with
period $HT$, we define a stationary process $\BX$ by $X(t)= x(t-U)$,
$t\in\bbr$, where $U$ is uniformly distributed between 0 and $HT$. The
process is, clearly, sample continuous and satisfies Assumption L. We
observe, further, that, if the supremum in the interval $[0,T]$ is
achieved in the interior of the interval, then it is achieved at a
local maximum of the function $x$. If the value at the local maximum
is equal to 2, then 
it is due to an endpoint of a
component, and, since the contribution of any component  has length
exceeding $T$, this supremum is unique. If the value at the
local maximum is smaller than 2, then that local maximum is separated
from the nearest local maximum with
the same value of $x$ by at least the distance induced by Step 2,
which $T$. Consequently, in this case the supremum over $[0,T]$ is
unique as well. Similarly, if the supremum is achieved at one of the
endpoints of the interval, it has to be unique as well, on a set of
probability 1. Therefore, the process $\BX$ satisfies Assumption
U$_T$. 

We now show that for the process $\BX$ constructed above, the density
$f_{\BX,T}$ coincides with the function $f$ given in
\eqref{e:f.proper}, with which the construction was performed. According to
the above analysis, we need to account for the contribution of each
local maximum of the function $x$ over its period to the density
$f_{\BX,T}$. The local maxima may appear in Step 1 of the
construction, and then they are due to left blocks. They may
apear in Step 3 of the construction, and then they are due to 
right blocks. They may appear Step 2 of the
construction, and then they are due to central blocks. Finally, the
points where $x$ has value 2 are always local maxima. We will see
that they are due to base blocks. We start with the latter local
maxima. Clearly, each such local maximum is, by periodicity, equal to
one of the $B$ 
values, $\sum_{j=1}^{i} L_j-HT, \, i=1,\ldots, B$. The $i$th of these
points becomes the global maximum of $\BX$ over $[0,T]$ if and only if 
$$
U\in \left( HT-\sum_{j=1}^{i} L_j, \, (H+1)T-\sum_{j=1}^{i}
  L_j\right)\,,
$$
and the global maximum is then located at the point
$\sum_{j=1}^{i} L_j - HT+U$. Therefore, the contribution of each such
local maximum to the density is $1/HT$ at each $0<t<T$, and overall
the points where $x$ has value 2 contribute to $f_{\BX,T}$
\begin{equation} \label{e:contribute.base}
f_{\rm base}(t) = \frac{B}{HT}, \ 0<t<T\,.
\end{equation}

Next, we consider the contribution to $f_{\BX,T}$ of the local maxima
due to left blocks. For simplicity of notation we consider only the
left blocks in the first component. Then the local maximum due to the
$j$th left block is at the point $jd + \sum_{i=1}^{j}v_i$. 
As before, we need to check over what interval of the values of $U$
this local maximum becomes the global maximum of $\BX$ over
$[0,T]$. The relevant values of $U$ must be such that the time
interval $\bigl(jd + \sum_{i=1}^{j-1}v_i, \, jd +
\sum_{i=1}^{j}v_i\bigr)$ is shifted to cover the origin, and this
corresponds to an interval of length $v_j$ of the values of $U$. The
shifted local maximum itself will then be located within the interval
$(0,v_j)$, which contributes $1/HT$ at each $0<t<v_j$. Overall, the local maxima
due to left blocks contribute to $f_{\BX,T}$
\begin{equation} \label{e:contribute.left}
f_{\rm left}(t) = \frac{1}{HT}\sum_{\rm left\ blocks}
\one_{(0,v_i)}(t), \ 0<t<T\,. 
\end{equation}
Similarly,  the local maxima
due to right blocks contribute to $f_{\BX,T}$
\begin{equation} \label{e:contribute.right}
f_{\rm right}(t) = \frac{1}{HT}\sum_{\rm right\ blocks}
\one_{(u_i,T)}(t), \ 0<t<T\,. 
\end{equation}

Finally, we consider the central blocks. If the first component has a
central block, then the local maximum due to the central block is at
the point $(l+1)d + \sum_{i=1}^{l}v_i+v$. Any value of
$U$ that makes this local maximum the global maximum over $[0,T]$ must
be such that the time interval $\bigl( (l+1)d + \sum_{i=1}^{l}v_i, \,
(l+1)d + \sum_{i=1}^{l}v_i+v\bigr)$ is shifted to cover the
origin. Furthermore, that value of $U$ must also be such that the time
interval $\bigl( (l+1)d + \sum_{i=1}^{l}v_i+v, \,
(l+1)d + \sum_{i=1}^{l}v_i+v+T-u\bigr)$ is shifted to cover the right
endpoint $T$. If we think of shifting the origin instead of shifting
$x$, the origin will have to be located in the interval $\bigl( (l+1)d + \sum_{i=1}^{l}v_i, \,
(l+1)d + \sum_{i=1}^{l}v_i+v-u\bigr)$. This corresponds to a set of
values of $U$ of measure $v-u$, and the shifted local maximum will
then be located within the interval $(u,v)$, which contributes $1/HT$
at each $u<t<v$ to the density. Overall, the local maxima
due to central blocks contribute to $f_{\BX,T}$
\begin{equation} \label{e:contribute.central}
f_{\rm central}(t) = \frac{1}{HT}\sum_{\rm central\ blocks}
\one_{(u,v)}(t), \ 0<t<T\,. 
\end{equation}
Since
$$
f_{\BX,T}(t) = f_{\rm base}(t) + f_{\rm left}(t) + f_{\rm right}(t)+
f_{\rm central}(t), \ 0<t<T\,,
$$
we conclude by \eqref{e:contribute.base} -
\eqref{e:contribute.central} that $f_{\BX,T}$ indeed coincides with
the function $f$ given in \eqref{e:f.proper}. Therefore, we have
proved the converse part of the theorem in the case when the candidate
density $f$ is of the form \eqref{e:f.proper}. 

We now prove the converse part of the theorem for a general $f$
with properties (a)-(c) in the statement of the theorem. Recall
that we need only to treat the second possibility in property (c).  In
order to construct a stationary process $\BX$ for which $f_{\BX,T}=f$,
we will approximate the candidate density $f$ by functions of
the form \eqref{e:f.proper}. Since we will need to deal with 
convergence of a sequence of continuous stationary processes we have
just constructed in the case when the candidate density is of the form
\eqref{e:f.proper}, we record, at this point, several properties of
the stationary periodic process $X(t)= x(t-U)$, $t\in\bbr$ constructed
above. 

{\bf Property 1} \ {\it The process $\BX$ is uniformly bounded: $-1\leq
X(t)\leq 2$ for all $t\in\bbr$.}

\medskip

{\bf Property 2} \ {\it The process $\BX$ is Lipschitz continuous, and
  its Lipschitz constant does not exceed $3/2d$.}

\medskip

{\bf Property 3} \ {\it The process $\BX$ is differentiable except at
  countably many points, at which $\BX$ has left and right
  derivatives. On the set $D_0=\{ t:\, X(t)>0\}$ the derivatives
  satisfy 
$$
\left| X^\prime (t)\right| \geq \frac{1}{2^Nd}
$$
(where the bound applies to both left and right derivatives if $t$ is
not a differentiability point). Here $N$ is the bigger of the largest
number of left blocks any component
has, and the largest number of the right blocks any component
has. Similarly, on the set $D_1=\{ t:\, X(t)\leq 0\}$ the derivatives
  satisfy 
$$
\left| X^\prime (t)\right| \geq \tau_d\,,
$$
where $\tau_d>0$ stays bounded away from zero for $d$ in a compact interval.
}

\medskip

{\bf Property 4} \ {\it The distance between any  two local maxima of
  $\BX$ cannot be smaller than $d$. At its local maxima, $\BX$ takes
  values in a finite set of at most $N+3$ elements. Moreover, the
  absolute difference in the 
  values of the process $\BX$ in two local maxima in the interval
  $(0,T)$ is at least $2^{-N}$, where $N$ is as above.} 

All these properties follow from the corresponding properties of the function
$x$ by considering the possible configuration of the blocks in a
component.

We will now construct a sequence of approximations to a candidate
density $f$ as above. Let $n=1,2,\ldots$. It follows from the general
properties of \cadlag\ functions (see e.g. \cite{billingsley:1999})
that there is a finite partition $0=t_0<t_1<\ldots<t_k=T$ of the
interval $[0,T]$ such that
\begin{equation}\label{e:unif.cty.int}
|f(s)-f(t)|\leq \frac{1}{nT} \ \text{for all $t_i\leq s,t<t_{i+1}$,
  $i=0,\ldots, k-1$.}
\end{equation}
We define a piecewise constant function $\tilde f_n$ on $(0,T)$ by
setting, for each $i=1,\ldots, k$, the value of $\tilde f_n$ for
$t_{i-1}\leq t<t_i$ to be 
$$
\tilde f_n(t) = \frac{1}{knT}\max\left\{ j=0,1,\ldots:\, f(s)\geq
  \frac{j}{knT}\ \text{for all $t_{i-1}\leq s<t_i$}\right\}\,.
$$
By definition and \eqref{e:unif.cty.int} we see that
\begin{equation}\label{e:diff.approx}
f(t)-\frac{2}{nT}\leq \tilde f_n(t)\leq f(t), \ 0<t<T\,.
\end{equation}
Next, we notice that for every
$i=1,\ldots,k-1$ there are
points $s_i\in (t_{i-1},t_i)$ and $s_{i+1}\in (t_i,t_{i+1})$ such that
$$
\bigl| f(s_i)-f(s_{i+1})\bigr| \geq \bigl| \tilde f_n(t_i-)-\tilde
f_n(t_i)\bigr| - \frac{1}{knT}\,.
$$
Therefore,
\begin{equation}\label{e:TV.approx}
TV_{(0, T)}(\tilde f_n)\leq TV_{(0, T)}(f) + \frac{1}{nT}\,.
\end{equation}
We now define
$$
f_n(t) = \tilde f_n(t) + \frac{1}{nT}\, \ 0<t<T\,.
$$
Clearly, the function $ f_n$ is \cadlag, has bounded variation
on $(0,T)$  and is bounded away from
zero. By \eqref{e:TV.approx},  $ f_n$ also satisfies
\eqref{e:TV} since $f$ does. Finally, since $\int_0^T 
f_{\BX,T}(t)\, dt<1$, we see by \eqref{e:diff.approx} that, for all
$n$ large enough, $\int_0^T 
 f_{\BX,T}(t)\, dt<1$ as well. Therefore, for such $n$ the
function $ f_n$ has properties (a)-(c) in the statement of the
theorem, and in the sequel we will only consider $n$ large as
above. We finally notice that $ f_n$ takes finitely many
different values, all of which are in the set $\{ j/knT, \,
j=1,2,\ldots\}$. Therefore, $ f_n$ can be written in the form
\eqref{e:f.proper}, with $H=kn$. Indeed, the blocks can be built by
combining into a block all neighboring intervals where
the value of $ f_n$ is the smallest, subtracting $1/knT$ from
the value of $ f_n$ in the constructed block and iterating the
procedure.  

We have already proved that for any function of the type
\eqref{e:f.proper} there is a stationary process required in the
statement of the theorem. Recall that a construction of this
stationary process depends on assignment of blocks in a proper
collection to components, and we would like to make sure that no
component has ``too many'' left or right blocks. To achieve this, we
need to distribute the left and right blocks as evenly as possible
between the components. Two observations are useful here. First of
all, it follows from the definition of $f_n$ and \eqref{e:f.proper}
that 
$$
\frac{1}{k_nnT} (L_n+B_n) = 
f_n(0+)\leq f(0+)+ \frac{1}{k_nnT} \leq f(0+)+ 1
$$
for $n$ large enough (we are writing $k_n$ instead of $k$ to 
emphasize the dependence of $k$ on $n$), where $L_n$ and $B_n$ are the
numbers of the the left and base blocks in the $n$th collection. On
the other hand, similar considerations tell us that
$$
\frac{1}{k_nnT} B_n = \inf_{0<t<T} f_n(t) \geq 
\inf_{0<t<T} f(t) - \frac{2}{nT} 
\geq \frac12 \inf_{0<t<T} f(t)\,,
$$
once again for $n$ large enough, where we have used property (b) of
$f$. Therefore, for such $n$,
\begin{equation} \label{e:few.left}
\frac{L_n}{B_n} \leq 2\frac{f(0+)+ 1}{\inf_{0<t<T} f(t)}\,,
\end{equation}
and the right hand side  is a finite quantity depending on $f$, but
not on $n$. Performing a similar analysis for the right blocks, and
recalling that we are distributing the left and right blocks as evenly as possible
between the components, we see that there is a number $\Delta_f\in
(0,\infty)$ such that for all $n$ large enough, no component in the
$n$th collection has more than $\Delta_f$ left blocks or $\Delta_f$
right blocks. 

We will also need bounds on the important parameter $d=d_n$ appearing
in the construction of a stationary process corresponding to functions
of the type \eqref{e:f.proper}; these bounds do not depend on a
particular way we assigns blocks to different components. Recall that 
\begin{equation} \label{e:dn}
d_n = \frac{k_nnT}{m_n}\left( 1- \int_0^T f_n(t)\, dt\right)\,,
\end{equation}
where $m_n = B_n+L_n+R_n+C_n$ (in the obvious notation) is the total
number of blocks in the $n$th collection. Since
$$
\frac{1}{k_nnT}\bigl( B_n + \max(L_n,R_n,C_n)\bigr) =
\sup_{0<t<T}f_n(t), \ \ \frac{1}{k_nnT}B_n = \inf_{0<t<T}f_n(t)\,,
$$
we see that 
\begin{equation} \label{e:dn1}
\inf_{0<t<T}f_n(t)\leq \frac{1}{k_nnT}   m_n \leq 3
\sup_{0<t<T}f_n(t)\,.
\end{equation}
We also know by the uniform convergence that $\int_0^T f_n\to \int_0^T
f$. Therefore, by \eqref{e:dn} and \eqref{e:dn1} we obtain that, for
all $n$ large enough, 
\begin{equation} \label{e:dn.bounds}
\frac{1- \int_0^T f(t)\, dt}{4 \sup_{0<t<T}f(t)}\leq
d_n\leq \frac{2}{\inf_{0<t<T}f(t)}\,. 
\end{equation}

An immediate conclusion is the following fact. By
construction, the distribution of $X_n(0)$ is absolutely
continuous; let $g_n$ denote the right continuous version of its
density.  Since $\BX_n$ is obtained by uniform shifting of a
piecewise linear periodic function with period $H_nT$, the value of the
density $g_n(v)$ at each point $v$ times the length of the period   does not exceed
the total number of the linear 
pieces in a period divided by the smallest absolute slope of any
linear piece. The former does not exceed $2m_n$, and by {\bf Property
  3} and the above, the latter cannot be smaller than
$$
\min\left(\frac{1}{2^{\Delta_f}d_n}, \, \tau_{d_n}\right)\,.
$$
Since, by \eqref{e:dn.bounds}, $d_n$ is uniformly bounded from above, 
we 
conclude, for some finite positive constant $c=c(f)$, 
$g_n(v) \leq c(f) m_n/H_n$. Further, by the definition of $d_n$, 
$$
m_nd_n = H_nTP\bigl( \tau_{\BX_n,T}\in \{ 0,T\}\bigr)
\leq H_nT\,.
$$
Once again, since by \eqref{e:dn.bounds}, $d_n$ is uniformly bounded
from below, we conclude that 
\begin{equation} \label{e:density,zero}
g_n(v) \ \text{is uniformly bounded in $v$ and $n$.}
\end{equation}

Let $\BX_n$ be the stationary process corresponding to $f_n$
constructed above. We view $\BX_n$ as a random element of the space
$\bbc$ of continuous functions on $\bbr$ which we endow with the
metric
$$
\rho(\bx, \by)= \sum_{m=1}^\infty 2^{-m} \bigl( \sup_{|t|\leq
  m}|x(t)-y(t)|\bigr)\,.
$$
Let $\mu_n$ be the law of $\BX_n$ on $\bbc$, $n=1,2,\ldots$ (but large
enough, as needed). By {\bf Property 1} and {\bf Property 2} of the processes $\BX_n$ and
the lower bound in \eqref{e:dn.bounds}, these processes are uniformly bounded
and equicontinuous. Therefore, by Theorem 7.3 in
\cite{billingsley:1999}, for every fixed $m=1,2,\ldots$ the
restrictions of the measures $\mu_n$ to the interval $[-m,m]$ form a
tight family of probability measures. Let $n_{1j}\to\infty$ be a
sequence positive integers such that the restrictions of
$\mu_{n_{1j}}$ to $[-1,1]$ converge weakly to a probability measure
$\nu_1$ on $C([-1,1])$.  Inductively define for
$m=2,3,\ldots$ $n_{mj}\to\infty$ to be a subsequence of the sequence 
$n_{m-1,j}\to\infty$ such that the restrictions of
$\mu_{n_{mj}}$ to $[-m,m]$ converge weakly to a probability measure
$\nu_m$ on $C([-m,m])$. Then the ``diagonal'' sequence of measures
$\bigl(\mu_{n_{jj}},\, j=1,2,\ldots \bigr)$
is such that the restrictions of these measures to each interval
$[-m,m]$ converge weakly to $\nu_m$ on $C([-m,m])$. By the Kolmogorov
existence theorem, there is a (cylindrical) probability measure $\nu$ on
functions on $\bbr$ whose restrictions to each interval $[-m,m]$
coincide with $\nu_m$ (considered now as a cylindrical measure). Since
each probability measure $\nu_m$ is supported by $C([-m,m])$, the
measure $\nu$ itself is supported by functions in $\bbc$. By
construction, the measure $\nu$ is shift invariant. If $\BX$ is the
canonical stochastic process defined on $\bigl(\bbc, \nu\bigr)$, then $\BX$ is a
sample continuous stationary process. In the remainder of the proof we
will show that $\BX$ satisfies Assumption L and Assumption U$_T$, and
that $f_{\BX,T}=f$.

We start with proving that Assumption L holds for $\BX$. It is,
clearly, enough to prove that, on a set of probability 1,
\begin{equation} \label{e:no.accum}
\text{any two local maxima of $\BX$ are at least}
\ \theta:=\frac{1-\int_0^Tf(t)\, dt}{5 \sup_{0<t<T}f(t)}\ \text{apart.}
\end{equation}
Suppose that \eqref{e:no.accum} fails. Then there is $m$ sucht that,
on an event of positive probability, two local maxima of $\BX$ closer
than $\theta$ exist in the time interval $[-m,m]$. Recall that a
subsequence of the sequence of the (laws of) $\BX_n$ converges weakly
in the uniform topology on $C([-m,m])$ to the (law of) $\BX$. For
notational simplicity we will identify that subsequence with the
entire sequence $(\BX_n)$. By the
Skorohod representation theorem (Theorem 6.7 in
\cite{billingsley:1999}), we may define the processes  $(\BX_n)$ on
some probability space so that $\BX_n\to\BX$ a.s. in $C([-m,m])$. Fix
$\omega$ for which this convergence holds, and for which $\BX$ has two
local maxima closer
than $\theta$ exist in the time interval $[-m,m]$.  It straightforward
to check that the uniform convergence and {\bf Property 3} above imply 
that for all $n$
large enough, the processes $\BX_n$ will have two local maxima closer
than $5\theta/4$. This is, of 
course, impossible, due to {\bf Property 4} and
\eqref{e:dn.bounds}. The resulting contradiction proves that $\BX$
satisfies Assumption L. 

Next, we prove that Assumption U$_T$ holds for $\BX$.  Since the
process $\BX$ satisfies Assumption L, by Lemma 2.2 in
\cite{samorodnitsky:yi:2011a},  it has
finitely many local maxima in the interval $(0,T)$ (in fact, by
\eqref{e:no.accum}, it cannot have more than $\lceil T/\theta\rceil$
local maxima). Clearly, the values of $\BX$ at the largest local
maximum and the second largest local maximum  (if any) are well
defined random variables. We denote by 
$(M_1,M_2)$ the largest and the second largest among $X(0)$, $X(T)$
and the values of $\BX$ at the largest local
maximum and the second largest local maximum  (if any). The fact that
Assumption U$_T$ holds for $\BX$ will follow once we prove that
\begin{equation} \label{e:M1M2}
P\bigl( M_1=M_2\bigr)=0\,.
\end{equation}
We proceed similarly to the argument in the proof of Assumption L. We
may assume that $\BX_n\to\BX$ a.s. in $C[0,T]$. Fix $\omega$ for which
this convergence holds. The uniform convergence and {\bf Property 3}
of the processes $(\BX_n)$, together with the uniform upper bound on
$d_n$ in \eqref{e:dn.bounds}, show that, for every local maximum
$t_\omega$ of $\BX$ in the interval $(0,T)$ and any $\delta>0$, there
is $n(\omega,\delta)$ such that for all $n> n(\omega,\delta)$, the
process $\BX_n$ has a local maximum in the interval $(t_\omega-\delta,
t_\omega+\delta)$. This immediately implies that
$$
M_1-M_2\geq \limsup_{n\to\infty} \bigl( M_1^{(n)} - M_2^{(n)}\bigr)
$$
a.s., where the random vector $(M_1^{(n)} ,M_2^{(n)} )$ is defined for
the 
process $\BX_n$ in the same way as the random vector $(M_1,M_2)$ is defined for the
process $\BX$, $n=1,2,\ldots$. In particular, for any $\vep>0$,
\begin{equation} \label{e:far.apart}
P\bigl( M_1-M_2<\vep\bigr) \leq \limsup_{n\to\infty} P\bigl( M_1^{(n)}
- M_2^{(n)}<\vep\bigr) \,.
\end{equation}

As a first step, notice that, by {\bf Property 4} of the processes
$(\BX_n)$, for any $\vep<\Delta_f$, 
\begin{equation} \label{e:apart.1}
P\Bigl( M_1^{(n)} - M_2^{(n)}<\vep, 
\end{equation}
$$
\text{both $M_1^{(n)}$ and
  $M_2^{(n)}$ achieved at local maxima}\Bigr)=0
$$
for each $n$. Next, since by {\bf Property 4}, at its local maxima the
process $\BX_n$ can take at most $\Delta_f+3$ possible values, we
conclude by \eqref{e:density,zero} that for all $\vep>0$, 
\begin{equation} \label{e:apart.2}
P\Bigl( M_1^{(n)} - M_2^{(n)}<\vep, \ \text{one of $M_1^{(n)}$} 
\end{equation}
$$
\text{ and
  $M_2^{(n)}$ is achieved at a local maximum, and one at an
  endpoint}\Bigr) \leq c_f\vep\,,
$$
for some $c_f\in (0,\infty)$. Finally, we consider the case when both 
$M_1^{(n)}$ and $M_2^{(n)}$ are achieved at the endpoints of the
interval $[0,T]$. In that case, it is impossible that $\BX_n$ has a
local maximum in $(0,T)$, since that would force time 0 to belong
to one of the decreasing linear pieces of the process due to left
blocks, and time $T$ to belong one of the increasing linear pieces of
the process due to right blocks. By construction, the distance between
any two points belonging to such intervals is larger than $T$. That
forces $X_n(t), 0\leq t\leq T$ to consist of at most two linear
pieces. By {\bf Property 3} of the process $\BX_n$, in order to
achieve $|X_n(0)-X_n(T)|\leq \vep$, each block of the proper
collection generating $\BX_n$ contributes at most an interval of
length $\vep /\min(1/(2^\Delta d_n),\tau_{d_n})$ to the set of
possible shifts $U$. Recall 
that there are $m_n$ blocks in the collection. By the uniform bounds
\eqref{e:dn.bounds} we conclude that for all $\vep>0$,
\begin{equation} \label{e:apart.3}
P\Bigl( M_1^{(n)} - M_2^{(n)}<\vep, 
\end{equation}
$$
\text{ $M_1^{(n)}$  and
  $M_2^{(n)}$ achieved at the  endpoints}\Bigr) 
$$
$$
\leq \vep \frac{m_n}{H_nT} \frac{1}{\min(1/(2^\Delta d_n),\tau_{d_n})} 
\leq \vep \frac{1}{d_n\min(1/(2^\Delta d_n),\tau_{d_n}) }
\leq  \tilde c_f\vep\,,
$$
for some $\tilde c_f\in (0,\infty)$. 

Combining \eqref{e:far.apart}, \eqref{e:apart.1}, \eqref{e:apart.2}
and \eqref{e:apart.3} we see that for all $\vep>0$ small enough, 
$$
P\bigl( M_1-M_2<\vep\bigr) \leq (c_f+\tilde c_f)\vep\,.
$$
Letting $\vep\downarrow 0$ we obtain \eqref{e:M1M2}, so that the
process $\BX$ satisfies Assumption U$_T$. 

It is now a simple manner to finish the proof of the theorem. Assume,
once again, that $\BX_n\to\BX$ a.s. in $C[0,T]$. Fix $\omega$ for which
this convergence holds, and both $\BX$ and each $\BX_n$ have a unique
supremum in the interval $[0,T]$. It follows from the uniform
convergence that $\tau_{\BX_n,T}\to \tau_{\BX,T}$ as
$n\to\infty$. Therefore, we also have that $\tau_{\BX_n,T}\Rightarrow
\tau_{\BX,T}$ (weakly). However, by construction, $f_n(t)\to f(t)$ for
every $0<t<T$. This implies that $f$ is the density of $\tau_{\BX,T}$,
and the proof of the theorem is complete. 
\end{proof}

\section{Long intervals} \label{sec:long.intervals} 

In spite of the broad range of
possibilities for the distribution of the supremum location shown in
the previous section, it turns out that, when
the length of an interval becomes large, and the process satisfies a
certain strong mixing assumption, uniformity of the distribution of
the supremum location becomes visible at certain scales. We make this
statement precise in this section. 

In this section we allow a stationary process $\BX$ to have upper
semi-continuous, not necessarily continuous, sample
paths. Moreover, we will not generally impose either Assumption U$_T$,
or Assumption L. Without Assumption U$_T$, the supremum may not be
reached at a unique point, so we will work with the leftmost supremum
location defined in Section \ref{sec:assumptions}. 

Recall that a stationary stochastic process $\BX = (X(t),\, t\in\bbr)$
is called strongly mixing (or $\alpha$-mixing, or uniformly mixing) if 
$$
\sup\Bigl\{ \bigl| P(A\cap B)-P(A)P(B)\bigr|:\, A\in
\sigma\bigl( X(s), \, s\leq 0\bigr), \, B\in \sigma\bigl( X(s), \,
s\geq t\bigr)\Bigr\} 
$$
$$
\to 0 \ \text{as $t\to\infty$;}
$$
see e.g. \cite{rosenblatt:1962}, p. 195. Sufficient conditions on the
spectral density of a stationary Gaussian process that guarantee
strong mixing were established in \cite{kolmogorov:rozanov:1960}. 

Let $\BX$ be an upper semi-continuous  stationary process. 
We introduce  a ``tail version'' of the strong mixing assumption, defined as 
follows. 

{\bf Assumption} TailSM: \ there is a function $\varphi:\,
(0,\infty)\to \bbr$ such that
$$
\lim_{t\to\infty}P\bigl( \sup_{0\leq s\leq t}X(s)\geq
\varphi(t)\bigr)= 1
$$
and 
$$
\sup\Bigl\{ \bigl| P(A\cap B)-P(A)P(B)\bigr|:\, A\in
\sigma\bigl( X(s)\one(X(s)\geq \varphi(t)), 
$$
$$
 s\leq 0\bigr), \, B\in
\sigma\bigl( X(s) \one(X(s)\geq \varphi(t)), \, s\geq t\bigr)\Bigr\} 
\to 0 \ \text{as $t\to\infty$.}
$$

\bigskip

It is clear that if a process is strongly mixing, then it also
satisfies Assumption TailSM. The point of the latter assumption
is that we are only interested in mixing properties of the part of the
process ``responsible'' for its large values.  For example, the process
$$
X(t)= \left\{ \begin{array}{ll}
Y(t) & \text{if $Y(t)>1$} \\
Z(t) & \text{if $Y(t)\leq 1$}
\end{array}
\right., \, t\in\bbr\,,
$$
where $\BY$ is a strongly mixing process  such that $P(Y(0)>1)>0$,
and $\BZ$ an arbitrary stationary process such that $P(Z(0)<1 )=1$,
does not have to be strongly mixing, but it clearly satisfies 
  Assumption TailSM with $\varphi\equiv 1$.  

We will impose one more assumption on the stationary processes we
consider in this section. It deals with the size of the largest atom
the distribution of the supremum of the process may have.

{\bf Assumption} A: 
$$
\lim_{T\to\infty} \sup_{x\in\bbr}P\bigl( \sup_{t\in [0,T]}
X(t)=x\bigr) = 0\,.
$$
In Theorem \ref{t:asymp.unif} below Assumption A could be replaced by
requiring Assumption 
U$_T$ for all $T$ large enough. We have chosen Assumption A instead
since for many important stationary stochastic processes the supremum
distribution is known to be atomless anyway; see e.g. \cite{ylvisaker:1965}
for continuous Gaussian processes and \cite{byczkowski:samotij:1986}
for certain stable processes. The following sufficient condition for
Assumption A is also elementary: suppose that the process $\BX$ is
ergodic. If for some $a\in\bbr$, $P \bigl( \sup_{t\in [0,1]}
X(t)=x\bigr)=0$ for all $x>a$ and $P(X(0)>a)>0$, then Assumption A is
satisfied.

\begin{theorem} \label{t:asymp.unif}
Let $\BX = (X(t),\, t\in\bbr)$ be a stationary sample upper semi-continous
process, satisfying Assumption TailSM and Assumption A. The density
$f_{\BX,T}$ of the supremum location satisfies
\begin{equation} \label{e:dens.unif}
\lim_{T\to\infty}\sup_{\vep\leq t\leq 1-\vep} \Bigl| T
f_{\BX,T}(tT)-1\Bigr|=0
\end{equation}
for every $0<\vep<1/2$. In particular, the law 
of $\tau_{\BX,T}/T$ converges weakly to the uniform distribution on
$(0,1)$. 
\end{theorem}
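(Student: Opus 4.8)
The plan is to prove the weak convergence of $\tau_{\BX,T}/T$ to the uniform law on $(0,1)$ first, by a blocking argument, and then to bootstrap that to the uniform density estimate \eqref{e:dens.unif} using the monotonicity built into Lemma~\ref{l:key}.

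For the weak convergence, fix $n\in\bbn$ and split $[0,T]$ into $n$ equal blocks $I_1,\dots,I_n$ of length $w_T\sim T/n$, separated by $n-1$ gaps $G_1,\dots,G_{n-1}$ of common length $g_T$, chosen so that $g_T\to\infty$ and $g_T=o(T)$ (e.g.\ $g_T=\sqrt T$). Set $M_j=\sup_{I_j}X$. I would argue: (a) $P(\tau_{\BX,T}\in\bigcup_\ell G_\ell)\to0$, since each gap sits at distance of order $T$ from both endpoints, so by \eqref{e:upper.bound} contributes $O(g_T/T)$ to $\int f_{\BX,T}$, while Assumption~A rules out the supremum being attained simultaneously in a gap and a block; (b) $M_1,\dots,M_n$ are asymptotically i.i.d.: by the first clause of Assumption~TailSM, $P(M_j\ge\varphi(g_T))\to1$ for each $j$ (each $I_j$ contains a translate of $[0,g_T]$), on that event $M_j$ is a functional of $\{X(s)\one(X(s)\ge\varphi(g_T))\}$ restricted to $I_j$, consecutive blocks are $g_T$-separated, and iterating the mixing bound of Assumption~TailSM shows that probabilities of fixed events factor up to an error $(n-1)\alpha(g_T)+o(1)\to0$; (c) ties are negligible, $P(M_i=M_j)\to0$ for $i\ne j$, because in the i.i.d.\ approximation this is at most $\sup_x P(\sup_{[0,w_T]}X=x)$, which tends to $0$ by Assumption~A. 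Given (b)--(c), for each $k$ the probability $P(\tau_{\BX,T}\in I_k)$ is squeezed between $P(M_k>M_j,\ j\ne k)-P(\sup_{\cup G_\ell}X\ge\max_j M_j)$ and $P(M_k\ge M_j,\ \forall j)$, and both bounds tend to $1/n$ — by exchangeability of the limiting i.i.d.\ vector together with a short estimate bounding the contribution of the atoms of the common law of $M_j$. Summing over the blocks meeting a fixed interval $(aT,bT)$, invoking (a), and then letting $n\to\infty$ yields $P(\tau_{\BX,T}/T\in(a,b))\to b-a$ for all $0<a<b<1$ and, with $a\downarrow0$, also $\limsup_T P(\tau_{\BX,T}/T\le\vep)\le\vep$ for every $\vep$, hence the asserted weak convergence.

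For \eqref{e:dens.unif}: since the limit distribution function is continuous, the weak convergence just proved gives $\sup_{0\le x\le1}\bigl|P(\tau_{\BX,S}\le xS)-x\bigr|\to0$ as $S\to\infty$. Fix $\vep\in(0,1/2)$ and $\alpha\in(0,\vep)$. Applying Lemma~\ref{l:key} with the pair of lengths $(1+\alpha)T$ and $T$ and shifts $\delta\in[0,\alpha T]$, integrating in $\delta$, and using right-continuity of the density gives, for every $t\in(0,T)$,
$$
\alpha T\,f_{\BX,T}(t)\ \ge\ \int_t^{t+\alpha T}f_{\BX,(1+\alpha)T}(u)\,du\ =\ P\bigl(\tau_{\BX,(1+\alpha)T}\in(t,t+\alpha T)\bigr)\ \longrightarrow\ \frac{\alpha}{1+\alpha}
$$
uniformly in $t$, so $\liminf_T\inf_{0<t<T}Tf_{\BX,T}(t)\ge(1+\alpha)^{-1}$. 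Symmetrically, with the lengths $T$ and $(1-\alpha)T$ and shifts $\delta\in[0,\alpha T]$,
$$
\alpha T\,f_{\BX,T}(t)\ \le\ \int_{t-\alpha T}^{t}f_{\BX,(1-\alpha)T}(u)\,du\ =\ P\bigl(\tau_{\BX,(1-\alpha)T}\in(t-\alpha T,t)\bigr)\ \longrightarrow\ \frac{\alpha}{1-\alpha}
$$
uniformly in $t\in[\vep T,(1-\vep)T]$, since there the rescaled endpoints stay in a fixed compact subinterval of $(0,1)$; hence $\limsup_T\sup_{\vep T\le t\le(1-\vep)T}Tf_{\BX,T}(t)\le(1-\alpha)^{-1}$. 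Letting $\alpha\downarrow0$ in both estimates gives \eqref{e:dens.unif}, and the weak convergence was already obtained above.

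The bulk of the work, and the main obstacle, is step (b)--(c): Assumption~TailSM only decouples the parts of the process above the moving level $\varphi$, so one must show that the block maxima can, with probability tending to one, be recovered from those parts — precisely the role of the first clause of Assumption~TailSM — and keep all the resulting errors uniform; likewise Assumption~A must be used to control non-uniqueness of the supremum both within and across the blocks and gaps. The passage from the weak convergence to the pointwise statement \eqref{e:dens.unif} via Lemma~\ref{l:key} is, by contrast, essentially mechanical.
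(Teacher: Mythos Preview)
Your proposal is correct and follows essentially the same route as the paper: first prove weak convergence of $\tau_{\BX,T}/T$ to the uniform law by a blocking argument (blocks of length $\sim T/n$ separated by gaps of length $\sqrt{T}$, with the gaps discarded via \eqref{e:upper.bound}, asymptotic independence of block maxima via Assumption~TailSM, and atomlessness via Assumption~A), then upgrade to \eqref{e:dens.unif} using the monotonicity of Lemma~\ref{l:key}. The only notable differences are cosmetic: the paper handles the ``asymptotically i.i.d.'' step by applying the probability integral transform $W_{i,T}=G_T(V_{i,T})$ so that the limiting vector is literally i.i.d.\ uniform, whereas you argue exchangeability more informally; and for \eqref{e:dens.unif} the paper argues by contradiction along subsequences while your direct integration of the inequality $f_{\BX,T}(t)\ge f_{\BX,(1+\alpha)T}(t+\delta)$ (and its reverse) over $\delta\in[0,\alpha T]$ is slightly cleaner.
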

\begin{proof}
It is obvious that \eqref{e:dens.unif} implies weak convergence of the law 
of $\tau_{\BX,T}/T$ to the uniform distribution. We will, however,
prove the weak convergence first, and then use it to derive
\eqref{e:dens.unif}. 

We start with a useful claim that, while having nothing to do with any
mixing by itself, will be useful for us in a subsequent application of
Assumption TailSM. Let $T_n, \, d_n\uparrow\infty$, $d_n/T_n\to 0$ as
$n\to\infty$. We claim that for any $\delta\in (0,1)$, 
\begin{equation} \label{e:d.n}
P\Bigl( \delta T_n-d_n\leq \tau_{\BX,T_n} \leq \delta
T_n+d_n\Bigr)=0\,.
\end{equation}
To see this, simply note that by \eqref{e:upper.bound}, 
the probability in \eqref{e:d.n} is bounded from
above by 
$$
2d_n \sup_{\delta T_n-d_n\leq t\leq \delta T_n+d_n} f_{\BX,T_n} (t)
\leq 2d_n \max\left( \frac{1}{\delta T_n-d_n}, \, \frac{1}{(1-\delta)
    T_n-d_n}\right) 
\to 0
$$
as $n\to\infty$. 

The weak convergence stated in the theorem will follow once we prove
that for any rational number $r\in (0,1)$, we have 
$P\bigl( \tau_{\BX,T}\leq rT\bigr)\to r$ as $T\to\infty$. Let $r=m/k$,
$m,k\in \bbn, \, m<k$ be such a rational number. Consider $T$ large
enough so that $T>k^2$, and partition the interval $[0,T]$ into
subintervals
$$
C_i = \left[ (T+\sqrt{T})\frac{i}{k}, \, (T+\sqrt{T})\frac{i+1}{k} -
  \sqrt{T}\right], \ i=0,1,\ldots, k-1\,,
$$
$$
D_i = \left[ (T+\sqrt{T})\frac{i}{k}-
  \sqrt{T}, \, (T+\sqrt{T})\frac{i}{k} \right], \ i=1,\ldots, k-1\,,
$$
and observe that by \eqref{e:d.n},
$$
P\Bigl( \tau_{\BX,T} \in \bigcup_{i=1}^{k-1}D_i\Bigr) \to 0 \ \text{as
  $T\to\infty$.}
$$
Therefore, 
\begin{equation} \label{e:split.C}
P\bigl( \tau_{\BX,T}\leq rT\bigr) = P\Bigl( \max_{0\leq i\leq m-1}
M_{i,T}\geq \max_{m\leq i\leq k-1}
M_{i,T}\Bigr) + o(1)
\end{equation}
as $T\to\infty$, where $M_{i,T} = \sup_{t\in C_i}X(t)$,
$i=0,1,\ldots. k-1$. 

Let $\varphi$ be the function given in Assumption TailSM. Then
\begin{equation} \label{e:M.V}
P\Bigl( \max_{0\leq i\leq m-1}
M_{i,T}\geq \max_{m\leq i\leq k-1}
M_{i,T}\Bigr) 
\end{equation}
$$
= P\Bigl( \max_{0\leq i\leq m-1}
V_{i,T}\geq \max_{m\leq i\leq k-1}
V_{i,T}\Bigr) + o(1)\,,
$$
where $V_{i,T} = \sup_{t\in C_i}X(t)\one\bigl( X(t)>\varphi(\sqrt{T})\bigr)$,
$i=0,1,\ldots. k-1$. 

Denote by $G_T$ the distribution function of each one of the random
variables $V_{i,T}$, and let $W_{i,T} = G_T(V_{i,T})$, $i=0,1,\ldots,
k-1$. It is clear that 
\begin{equation} \label{e:V.W}
P\Bigl( \max_{0\leq i\leq m-1}
V_{i,T}\geq \max_{m\leq i\leq k-1}
V_{i,T}\Bigr) 
\end{equation}
$$
=P\Bigl( \max_{0\leq i\leq m-1}
W_{i,T}\geq \max_{m\leq i\leq k-1}
W_{i,T}\Bigr) \,.
$$
Notice, further, that by Assumption TailSM, for every $0<w_i<1, \,
i=0,1,\ldots, k-1$, 
\begin{equation} \label{e:SM.cons}
\lim_{T\to\infty} \Bigl| P\Bigl( W_{i,T}\leq w_i, \, i=0,1,\ldots,
k-1\Bigr) - \prod_{i=0}^{k-1} P\Bigl( W_{i,T}\leq w_i\Bigr)\Bigr| =
0\,.
\end{equation}
Let 
$$
D(T) = \sup _{x\in\bbr}P\bigl( \sup_{t\in C_0}
X(t)=x\bigr) + P \bigl(\sup_{t\in C_0} X(t)\leq
\varphi(\sqrt{T})\bigr)\,.
$$
By Assumption A, $D(T)\to 0$ as $T\to\infty$. Since for every $0<w<1$, 
$$
w-D(T)\leq 
P\Bigl( W_{0,T}\leq w\Bigr)\leq w\,,
$$
we conclude by \eqref{e:SM.cons} that the law of the random vector
$\bigl(  W_{0,T},\ldots,  W_{k-1,T}\bigr)$ converges weakly, as
$T\to\infty$, to the law of a random vector $(U_0,\ldots, U_{k-1})$
with independent standard uniform components. Since this limiting law
does not charge the boundary of the set $\{
(w_0,w_1,\ldots, w_{k-1}):$ $ \max_{0\leq i\leq m-1}
w_i\leq \max_{m\leq i\leq k-1}w_i\}$, we conclude by
\eqref{e:split.C}, \eqref{e:M.V} and \eqref{e:V.W} that 
$$
P\bigl( \tau_{\BX,T}\leq rT\bigr) \to P\bigl(\max_{0\leq i\leq m-1}
U_i\geq \max_{m\leq i\leq k-1}U_i\bigr) = m/k=r\,,
$$
and so we have established the weak convergence claim of the
theorem. 

We now prove the uniform convergence of the densities in
\eqref{e:dens.unif}. Suppose that the latter fails for some
$0<\vep<1/2$. There are two possibilities. Suppose first that 
there is $\theta>0$, a sequence $T_n\to\infty$ and
a sequence $t_n\in [\vep,1-\vep]$ such that for every $n$, $ T_n
f_{\BX,T_n}(t_nT_n)\geq 1+\theta$. By compactness we may assume
that $t_n\to t_*\in [\vep,1-\vep]$ as $n\to\infty$. By Lemma
\ref{l:key} and the regularity properties of the density, for every
$n$ and every  $0<\tau,\delta<1$ such that
\begin{equation} \label{e:range}
\bigl( 1-(1-\tau)/t_n\bigr)_+<\delta<\min\bigl( \tau/t_n,1\bigr)
\end{equation}
we have
$$
T_nf_{\BX,(1-\tau)T_n}(t_n(1-\delta)T_n)\geq T_nf_{\BX,T_n}(t_nT_n)\geq
1+\theta\,.
$$
Since $t_n\to t_*$, there is a choice of $0<\tau<1$ such that
\begin{equation} \label{e:tau.theta}
1+\theta>\frac{1}{1-\tau}
\end{equation} 
and, moreover, the range in 
\eqref{e:range} is nonempty for all $n$ large enough. Furthermore, 
we can find $0<a<b<1$ such that
$$
\bigl( 1-(1-\tau)/t_n\bigr)_+<a<b<\min\bigl( \tau/t_n,1\bigr)
$$
for all $n$ large enough. Therefore, for such $n$
$$
(1+\theta)(b-a)\leq \int_a^b
T_nf_{\BX,(1-\tau)T_n}(t_n(1-\delta)T_n)\, d\delta
$$
$$
= \frac{1}{t_n} P \Bigl( \tau_{\BX,(1-\tau)T_n}\in \bigl(
(1-b)t_nT_n,\, (1-a)t_nT_n\bigr)\Bigr) \to \frac{1}{1-\tau}(b-a)
$$
as $n\to\infty$ by the already established weak convergence. This
contradicts the choice \eqref{e:tau.theta} of $\tau$. 

The second way \eqref{e:dens.unif} can fail is that there is $0<\theta<1$, a
sequence $T_n\to\infty$ and a sequence $t_n\in [\vep,1-\vep]$ such
that for every $n$, $ T_n f_{\BX,T_n}(t_nT_n)\leq 1-\theta$. We can
show that this option is impossible as well by appealing, once again,
to Lemma \ref{l:key} and using an argument nearly identical to the 
one described above. Therefore, \eqref{e:dens.unif} holds, and 
 the proof of the theorem is complete. 
\end{proof}

The following corollary is an immediate conclusion of Theorem
\ref{t:asymp.unif}. It shows the uniformity of the limiting
conditional distribution of the location of the supremum given that it
belongs to a suitable subinterval of $[0,T]$.
\begin{corollary} \label{c:cond.unif}
Let $\BX = (X(t),\, t\in\bbr)$ be a stationary sample upper semi-continous
process, satisfying Assumption TailSM and Assumption A. Let $0<a_T\leq
a^\prime_T<b^\prime_T\leq b_T<T$ be such that 
$$
\liminf_{T\to\infty}\frac{a_T}{T}>0, \ \ 
\limsup_{T\to\infty}\frac{b_T}{T}<1, \ \
\lim_{T\to\infty}\frac{b^\prime_T-a^\prime_T}{b_T-a_T}=\theta.
$$
Then 
$$
\lim_{T\to\infty} P\Bigl( \tau_{\BX,T}\in \bigl(
a^\prime_T,b^\prime_T\bigr)\Big| \tau_{\BX,T}\in
\bigl(a_T,b_T\bigr)\Bigr)=\theta\,.
$$
\end{corollary}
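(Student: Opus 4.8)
The plan is to reduce the conditional probability to a ratio of two integrals of the density $f_{\BX,T}$, and then to insert the uniform estimate \eqref{e:dens.unif} supplied by Theorem \ref{t:asymp.unif}.

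First, since $0<a_T\leq a^\prime_T<b^\prime_T\leq b_T<T$, all the subintervals involved lie in the open interval $(0,T)$, in the interior of which the law $F_{\BX,T}$ is absolutely continuous with density $f_{\BX,T}$; hence, as soon as the denominator below is positive,
$$
P\Bigl( \tau_{\BX,T}\in (a^\prime_T,b^\prime_T)\,\Big|\,\tau_{\BX,T}\in(a_T,b_T)\Bigr)
=\frac{\int_{a^\prime_T}^{b^\prime_T}f_{\BX,T}(t)\, dt}{\int_{a_T}^{b_T}f_{\BX,T}(t)\, dt}\,.
$$
Substituting $t=sT$ and writing $g_T(s)=Tf_{\BX,T}(sT)$ turns the numerator into $\int_{a^\prime_T/T}^{b^\prime_T/T}g_T(s)\, ds$ and the denominator into $\int_{a_T/T}^{b_T/T}g_T(s)\, ds$.

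Next I would use the hypotheses on $a_T$ and $b_T$ to confine the rescaled intervals to a fixed compact subinterval of $(0,1)$: pick $\vep\in(0,1/2)$ with $\vep<\liminf_{T\to\infty}a_T/T$ and $1-\vep>\limsup_{T\to\infty}b_T/T$. Then for all $T$ large enough one has $[a_T/T,b_T/T]\subseteq[\vep,1-\vep]$, and a fortiori $[a^\prime_T/T,b^\prime_T/T]\subseteq[\vep,1-\vep]$. By \eqref{e:dens.unif}, $\eta_T:=\sup_{\vep\leq s\leq 1-\vep}|g_T(s)-1|\to 0$ as $T\to\infty$, so for all large $T$, using $b^\prime_T-a^\prime_T\leq b_T-a_T$,
$$
\Bigl|\int_{a_T/T}^{b_T/T}g_T(s)\, ds-\tfrac{b_T-a_T}{T}\Bigr|\leq \eta_T\tfrac{b_T-a_T}{T},\qquad
\Bigl|\int_{a^\prime_T/T}^{b^\prime_T/T}g_T(s)\, ds-\tfrac{b^\prime_T-a^\prime_T}{T}\Bigr|\leq \eta_T\tfrac{b_T-a_T}{T}.
$$
In particular the denominator is at least $(1-\eta_T)(b_T-a_T)/T>0$ as soon as $\eta_T<1$, so the conditional probability is well defined for all large $T$.

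Finally, dividing the numerator estimate by the denominator estimate and then dividing through by $(b_T-a_T)/T>0$ shows that the conditional probability equals $\bigl((b^\prime_T-a^\prime_T)/(b_T-a_T)+r_T\bigr)\big/\bigl(1+r^\prime_T\bigr)$ with $|r_T|,|r^\prime_T|\leq\eta_T\to0$; since $(b^\prime_T-a^\prime_T)/(b_T-a_T)\to\theta$ by assumption, the limit is $\theta$, as claimed. The whole argument is a quantitative corollary of Theorem \ref{t:asymp.unif}, so no genuine obstacle is expected; the only points requiring mild care are checking that the rescaled intervals remain inside a fixed $[\vep,1-\vep]\subset(0,1)$ so that the uniform bound applies, verifying that the conditioning event has positive probability for large $T$ (which falls out of the same estimate), and observing that the degenerate case $\theta=0$ is handled automatically by the displayed ratio.
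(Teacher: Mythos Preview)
Your argument is correct and is exactly the direct computation the paper has in mind when it calls the corollary ``an immediate conclusion of Theorem~\ref{t:asymp.unif}''; the paper gives no separate proof. Your handling of the only nontrivial points---confining the rescaled intervals to a fixed $[\vep,1-\vep]$ so that \eqref{e:dens.unif} applies, and checking positivity of the conditioning probability---is clean and complete.
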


\end{document}